\newtheorem{thm}{Theorem}[section]
\newtheorem{lemma}[thm]{Lemma}
\theoremstyle{remark}
\newtheorem{example}[thm]{Example}
\newtheorem{remark}[thm]{Remark}
\newtheorem{defin}{Definition}
\newenvironment{dedication}
  {
   \thispagestyle{empty}
   \vspace*{\stretch{1}}
   \itshape             
   \raggedleft          
  }
  {\par 
   \vspace{\stretch{3}} 
  }
\def\C{\mathbb{C}}
\def\Q{\mathbb{Q}}
\def\Z{\mathbb{Z}}
\def\F{\mathbb{F}}
\def\G{\Gamma}
\def\x{R }
\def\xl{R}
\def\y{N }
\def\yl{N}
\title{Quadratic residue patterns and point counting on K3 surfaces}
\author[V. Kiritchenko]{Valentina Kiritchenko$^{1,2}$}
\address{$^1$HSE University,
Moscow, Russia}
\address{$^2$Institute for Information Transmission Problems, Moscow, Russia}
\address{$^3$Independent University of Moscow, Moscow, Russia}
\address{$^4$Aix Marseille Universit\'{e}, CNRS, Centrale Marseille, I2M UMR 7373, Marseille, France}
\address{$^5$University of California, Berkeley, USA}
\author[M. Tsfasman]{Mikhail Tsfasman$^{2,3}$}
\author[S. Vl\u{a}du\c{t}]{Serge Vl\u{a}du\c{t}$^{4}$}
\author[I. Zakharevich]{Ilya Zakharevich$^{5}$}
\email{vkiritch@hse.ru}
\email{MTsfasman@yandex.ru}
\email{serge.vladuts@univ-amu.fr}
\email{ilya@math.berkeley.edu}
\keywords{quadratic residues, elliptic curves, K3 surfaces}
\begin{document}
\maketitle
\begin{dedication}
To the memory of Lydia Goncharova.
\end{dedication}

\begin{abstract}Quadratic residue patterns modulo a prime are studied since 19th century. We state the last unpublished result of Lydia Goncharova, reformulate it and prior results in terms of algebraic geometry, and prove it. The core of this theorem is an unexpected relation between the number of points on a K3 surface and that on a CM elliptic curve.
\end{abstract}

\section{Introduction}                
Patterns formed by quadratic residues and nonresidues modulo a prime have been studied since the 19th century \cite{A,St,J} and continue to attract attention of contemporary mathematicians \cite{C,MT}.
Even the most elementary questions about distributions of quadratic (non)residues lead to difficult problems and deep results of number theory.
For instance, given a positive integer $\ell$ does there exist a prime number $p$ and an $\ell$-tuple of consecutive integers between $1$ and $p-1$ such that the $\ell$-tuple consists only of quadratic nonresidues (or residues) modulo $p$?
And how large is the least quadratic nonresidue modulo $p$?

In this note, we formulate a new result (Theorem \ref{t.main}) on patterns of quadratic (non)residues obtained by our late friend Lydia (Lida) Goncharova.
We recovered her result partly from her talk with the first author in December 2019, and partly from her older notes and e-mail correspondence.
We are grateful to David Kazhdan who helped us to recover Definition \ref{d.graph}.
Unfortunately, Lida had not written down a complete proof of her theorem, but we were able to  find a proof of her result.

As a byproduct, we get a K3 surface $X$ with the following property.
Point counting on $X$ over a finite field with $p$ elements reduces to point counting on an elliptic curve with complex multiplication.

In Section \ref{s.arithm}, we state Lida's result in the same way as she did it.
In Section \ref{s.geom}, we reformulate her result from a geometric viewpoint,  place it into a more general context and prove it.

\section{Quadratic residue patterns}\label{s.arithm}
Let $p$ be an odd prime.
Consider the sequence $1$, $2$, \ldots, $p-1$.
Replace every number $i$ in the sequence by the letter $\xl$ if $i$ is a quadratic residue modulo $p$, and by the letter $\yl$ otherwise.
Denote by $W_p$ the resulting word.
\begin{defin}
Let $S$ be a word of length $\ell\le{p-1}$ that contains only $\xl$ and $\yl$.
Define $n_p(S)$ as the number of subwords of $W_p$ that coincide with $S$, and are formed by $\ell$ consecutive elements of $W_p$.
\end{defin}
\begin{example}
Let $p=17$.
Then $W_p=\x\x\y\x\y\y\y\x\x\y\y\y\x\y\x\xl$.
If $\ell=3$, then $n_p(S)=2$ for all words $S$ of length $3$ except for $S=\x\x\xl$, while $n_p(\x\x\xl)=0$.
\end{example}

Note that if $\ell=1$, then $n_p(S)=\frac{p-1}2$.
Indeed, $n_p(\xl)=n_p(\yl)$ as the number of quadratic residues is equal to the number of nonresidues, and $n_p(\xl)+n_p(\yl)=p-1$.
For $\ell=2$, there are also explicit formulas that can be proved using elementary tricks \cite[Theorem 1]{A}.
For $\ell=3$, $4$, there are simple explicit formulas for certain linear combinations such as $n_p(\x\x\xl)+n_p(\y\y\yl)$ \cite{A, St}.
If $p=4k+3$ and $\ell=3$, then there are also simple formulas for $n_p(S)$ \cite[Part III, Formula I]{J}.
However, if $p=4k+1$ then the formula for $n_p(\x\x\xl)$ involves a more complicated ingredient.
Namely, define the function $J(k)$ by the formula
$$J(k)=\sum_{i=1}^{4k-2}\genfrac(){0.5pt}{0}{i(i+1)(i+2)}{p},$$
where $\genfrac(){0.5pt}{0}{a}{p}$ denotes the Legendre symbol.

If $p=4k+1$ and $\ell=3$, then all eight functions $n_p(S)$ can be easily expressed in terms of $J(k)$ \cite[Part III, Formula II]{J}.
This is closely related with Gauss' Last Entry (see \cite{M} for a short elementary exposition).
Jacobstahl also showed that $\frac{J^2(k)}{4}$ is one of the summands in the decompostion of $p$ into the sum of two squares \cite[Part II]{J}.
In particular, Jacobstahl's proof of formulas for $n_p(S)$ can also be regarded as the first proof of Gauss' Last Entry.
More geometrically, $n_p(S)$ can be computed via point counting on an elliptic curve with complex multiplication (CM).
We return to this approach in Example \ref{e.Gauss}.

If $\ell\ge 4$, the situation is even more complicated, for instance, a (not explicit) formula for $n_p(\x\x\x\xl)$ involves point counting on non-CM elliptic curves (see Section \ref{s.geom}).
We now consider a different problem about quadruples of residues, which still admits a solution in terms of $J(k)$.

Let $A=(a_1, a_2, a_3, a_4)$ be a quadruple of pairwise distinct residues modulo $p$.
We no longer assume that they are consecutive.
If $p=4k+1$, we may assign a graph $\Gamma_A$ to $A$ by the following rule.
Consider four vertices $v_1$,\ldots, $v_4$, and connect $v_i$ and $v_j$ by an edge
if and only if the difference $a_i-a_j$ is a quadratic residue modulo $p$
(since $-1$ is a quadratic residue this condition on $a_i-a_j$ is symmetric in $i$ and $j$).
In what follows, we identify quadruples that can be obtained from each other by a permutation or an additive translation, that is, we do not distinguish between $(a_1, a_2, a_3, a_4)$ and $(a_{\sigma(1)}, a_{\sigma(2)}, a_{\sigma(3)}, a_{\sigma(4)})$, where $\sigma\in S_4$, and between
$(a_1, a_2, a_3, a_4)$ and $(a_1+a, a_2+a, a_3+a, a_4+a)$, where $a$ is a residue modulo $p$.

\begin{defin} \label{d.graph}
Let $\Gamma$ be a graph with four vertices.
Define $n_p(\Gamma)$ as the number of all such quadruples $A$  that $\Gamma_A$ is topologically equivalent to $\Gamma$.
\end{defin}

As there are $11$ topological types of simple graphs with four vertices, there are $11$ numbers $n_p(\Gamma)$ for every $p$.

\begin{thm}[Goncharova] \label{t.main} Assume that $p=4k+1$.
All functions $n_p(\Gamma)$ (considered as functions of $k$) can be explicitly expressed as polynomials in $k$ and $d(k)$ where
$$d(k)=\frac{J(k)^2-4}{32}.$$

In particular, if $\Gamma$ is a complete graph with four vertices, then
$$n_p(\Gamma)=\frac{k(k-1)(k-4)+2kd(k)}{24}. \eqno(1)$$
\end{thm}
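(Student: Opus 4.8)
The plan is to read $n_p(\G)$ for the complete graph as a clique count in the Paley graph, reduce it through a common--neighbour count to a single two--variable character sum, and recognise that sum as the K3 point count; the whole statement then comes down to one clean identity between this sum and the square of the CM sum $J(k)$.

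Write $\chi$ for the Legendre symbol, with $\chi(0)=0$. Since $p=4k+1$ we have $\chi(-1)=1$, so the Paley graph $P$ on $\F_p$ (with $i\sim j$ iff $\chi(i-j)=1$) is undirected, and $\Gamma_A\cong K_4$ precisely when $\{a_1,\dots,a_4\}$ is a $4$--clique of $P$. Translations act freely on cliques and $S_4$ only reorders vertices, so $n_p(K_4)=N_4/p$, where $N_4$ is the number of $4$--cliques of $P$. From strong regularity one has the triangle count $N_3=\tfrac{p(p-1)(p-5)}{48}$, which will supply the elementary part of the answer.

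First I would run the standard recursion counting pairs (triangle, extending vertex). Expanding the three adjacency indicators $\tfrac{1+\chi(x-a)}2$ and using $\sum_x\chi(x-a)=0$, $\sum_x\chi((x-a)(x-b))=-1$, one finds that the number of common neighbours of a triangle $\{a,b,c\}$ equals $\tfrac{p-15+S_{abc}}8$, where $S_{abc}=\sum_x\chi((x-a)(x-b)(x-c))$. Since each $4$--clique arises from $4$ triangles, summing over the $N_3$ triangles gives $4N_4=\tfrac18\bigl[(p-15)N_3+\Sigma\bigr]$, i.e.
\[
N_4=\frac{(p-15)N_3+\Sigma}{32},\qquad \Sigma=\sum_{\text{triangles}}S_{abc}.
\]
By translation $S_{abc}$ depends only on the differences, and by the scaling $x\mapsto bx$ one reduces $\Sigma$ to a sum of the Legendre--family traces $S(\l)=\sum_u\chi(u(u-1)(u-\l))$ over the common neighbours $\l$ of $0$ and $1$. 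Expanding the two indicators $\tfrac{1+\chi(\l)}2,\tfrac{1+\chi(1-\l)}2$ and evaluating the resulting one--variable sums (each of which is elementary, e.g. $\sum_\l\chi(\l)S(\l)=1$), every term collapses except the genuinely two--dimensional one, leaving
\[
\Sigma=\frac{p(p-1)\,(6+D)}{48},\qquad D=\sum_{u,v\in\F_p}\chi\bigl(u(u-1)\,v(v-1)\,(v-u)\bigr).
\]
Substituting $N_3$, $\Sigma$ and $p=4k+1$ into the expression for $N_4$, a direct simplification shows that claim (1) is \emph{equivalent} to the single identity $D=J(k)^2-2p$.

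The main obstacle is precisely this identity, which is the K3--versus--CM statement advertised in the abstract. Up to the constant $p^2$, $D$ is the number of points of the affine surface $w^2=u(u-1)v(v-1)(v-u)$, whose smooth projective model is the K3 surface obtained as the double cover of $\P^2$ branched along six lines; the special, highly symmetric configuration of these lines forces the transcendental part of $H^2$ to be two--dimensional and isomorphic to the relevant piece of $\mathrm{Sym}^2 H^1$ of the CM curve $E:\,y^2=x^3-x$. Since $J(k)=\sum_x\chi(x^3-x)=-a_p(E)$ and the Frobenius eigenvalues of $E$ are $\pi,\bar\pi$ with $\pi\bar\pi=p$, the transcendental eigenvalues are $\pi^2,\bar\pi^2$, giving $D=\pi^2+\bar\pi^2=a_p(E)^2-2p=J(k)^2-2p$. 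I would either justify this cohomologically (identifying the $2$--dimensional transcendental Frobenius through the CM action) or, more self--containedly, evaluate $D$ directly: writing $D=\sum_u\chi(u(u-1))S(u)$ exhibits it as a quadratic--twist average of the Legendre family, which can be expanded into Jacobi sums and assembled into the square of the single CM Jacobi sum attached to $E$. Recognising the CM curve hidden in this line configuration, and controlling signs so that exactly $J(k)^2-2p$ (and no stray elementary term) survives, is the delicate point; everything else is bookkeeping. Once $D=J(k)^2-2p$ is in hand, substituting $32\,d(k)=J(k)^2-4$ yields (1), and the same common--neighbour bookkeeping applied to the other ten graph types expresses every $n_p(\Gamma)$ as a polynomial in $k$ and $d(k)$.
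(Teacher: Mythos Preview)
Your reduction is sound and your route is genuinely different from the paper's. You work combinatorially: interpret $n_p(K_4)$ as a Paley--graph clique count, run the common--neighbour recursion to reach $N_4=\tfrac{(p-15)N_3+\Sigma}{32}$, and massage $\Sigma$ into the two--variable character sum $D=\sum_{u,v}\chi\bigl(u(u-1)v(v-1)(v-u)\bigr)$, so that (1) becomes equivalent to the single identity $D=J(k)^2-2p$. The paper instead sets up the quadruple count directly as a point count on an explicit K3 surface $S$ (an intersection of three quadrics in $5$--space), passes birationally to the affine model $z^2=(x^2y^2+1)(x^2+y^2)$, and reduces (1) to the equivalent point--count identity $M_p=(p+1)^2+(N_p-p)^2+1$ with $N_p=\#E(\F_p)$ for $E:y^2=x^3-x$. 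So both arguments isolate the same ``K3~$=$~CM--square'' phenomenon, but through different models.

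The gap is in the last step. You correctly flag $D=J(k)^2-2p$ as the crux, but neither of your two proposed justifications is carried out. The cohomological sketch (``the line configuration forces the transcendental lattice to be two--dimensional, hence the Frobenius eigenvalues are $\pi^2,\bar\pi^2$'') is not a proof: one must actually identify the transcendental motive of this particular double cover with $\mathrm{Sym}^2 h^1(E)$, and this requires either an explicit correspondence or a nontrivial modularity/CM argument, not just a dimension count. The Jacobi--sum approach (``expand $\sum_u\chi(u(u-1))S(u)$ into Jacobi sums and assemble the square'') is plausible but you have not done it, and the sign and boundary bookkeeping you allude to is exactly where such computations go wrong.

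By contrast, the paper proves its equivalent identity by an elementary fibration trick that you might find instructive. After a change of variables to $y_1^2=(t^2x_1^4+1)(t^2+1)$, they fibre over $t$; the fibre is an elliptic curve whose isomorphism class over $\F_p$ depends only on the quadratic--residue pattern of $(t,t^2+1)$. The pleasant coincidence is that, for each of the four patterns, the set of admissible $t$ is itself parameterised by the \emph{same} twist $E_i$ of $u^2=s^4+1$ that the fibre is isomorphic to. Hence the total count is $\tfrac14\sum_i(\#E_i^\circ)^2$, which collapses to $p^2-6p+17+a^2$ with $a$ the Frobenius trace of $u^2=s^4+1$, i.e.\ $\pm J(k)$. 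This replaces your appeal to the structure of $H^2$ by a direct, self--contained computation; if you want to complete your version, the cleanest path is probably to transport this fibration argument to your model $w^2=u(u-1)v(v-1)(v-u)$ (fibre over $v/u$, say), rather than to chase Jacobi sums.
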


Goncharova had an elementary proof of this theorem, however, we were unable to recover the proof from her notes.
It seems that results of a similar flavor are proved in \cite{BE}.

Using computer we checked a version of formula (1) for all odd primes $p<20000$ (see formula (2) in Section 3 below. 
In the next section, we show that $n_p(\Gamma)$ can be computed by counting points on a K3 surface.

\section{Number of points on elliptic curves and K3 surfaces}\label{s.geom}
We now explain geometric meaning of numbers $n_p(S)$ and $n_p(\Gamma)$.
For simplicity, we focus on the cases where
$$S= \underbrace{\x\x\ldots\xl}_{\ell}$$
and $\Gamma$ is a complete graph on $\ell$ vertices (the other cases are completely analogous).

Let $\F_p$ be the finite field with $p$ elements.
Consider a curve $C\subset\bar\F_p^\ell$ obtained by intersecting $\ell-1$ quadrics:
$$x_2^2-x_1^2=1, \quad x_3^2-x_2^2=1, \ldots, \quad x_\ell^2-x_{\ell-1}^2=1.$$
Then every point $(x_1,\ldots,x_\ell)\in C$ with non-zero coordinates produces the $\ell$-tuple
$(x_1^2,x_2^2,\ldots, x_\ell^2)$ of consecutive quadratic residues modulo $p$.
Taking into account that $x_i$ and $-x_i$ produce the same quadratic residue $x_i^2$ we get that
$$n_p(S)=\frac{\# C^\circ(\F_p)}{2^\ell},$$
where $C^\circ\subset C$ consists of points whose coordinates are non-zero.
By the adjunction formula, the genus of $C$ is equal to $2^{\ell-2}(\ell-3)+1$.
In particular, $C$ has genus $0$ for $\ell=2$.
Hence, there is a simple explicit formula for  $n_p(\x\xl)$.

\begin{example}\label{e.Gauss} If $\ell=3$, then $C$ is the intersection of two quadrics in a 3-dimensional affine space.
The genus of $C$ is $1$, that is, the projective closure of $C$ is an elliptic curve.
Using rational parameterization of quadrics:
$$x_2+x_1=u, \quad x_2-x_1=u^{-1}; \quad x_3+x_2=v, \quad x_3-x_2=v^{-1},$$
we get that the projective closure of $C$ is isomorphic to the projective closure $E$ of the plane curve given by equation:
$$u(v^2-1)=v(u^2+1).$$
Since this curve has an automorphism of order $4$ over $\C$:
$$(u,v)\mapsto (iv, iu)$$
we get that $E$ is a CM elliptic curve, and ${\rm End}(E)=\Z[i]$.
Indeed, if we represent $E(\C)$ as the quotient of $\C$ by a lattice $L$, then $L$ has an automorphism of order 4.
Hence, a fundamental parallelogram of $L$ is a square, and $\C/L\simeq \C/\Z[i]$ (see also \cite[Example III.4.4]{S} for a more algebraic approach).

There is an isomorphism (over rational numbers) between $E$ and the elliptic curve with the Weierstrass form:
$$y^2=x^3-x.$$
The Edwards form \cite{E} of the same curve over $\C$ is 
$$x^2+y^2=1-x^2y^2.$$
Note that this is exactly the same equation as the one in Gauss' Last Entry.
It was historically studied in connection with lemniscate elliptic functions.
Nowadays, Edwards form is widely used in elliptic curve cryptography since it admits a more computationally efficient implementation of the addition law.

According to Gauss the number of points on the Edwards form of $E$ over $\F_p$ for $p=4k+1$ is equal to $(a_p-1)^2+b_p^2$ where $a_p+b_pi$ is a prime divisor of $p$ in the ring of Gaussian integers $\Z[i]$.
The prime divisor is uniquely characterized by the condition that $a_p-1+b_pi$ is divisible by $2+2i$.
This observation of Gauss was crucial for development of the theory of complex multiplication (see \cite[Section 3.1]{V} for a historical overview).
\end{example}
\begin{remark}
Using the formula for $n_p(\x\x\xl)$ from \cite[Part III]{J} we can express $J(k)$ (as defined in the previous section) by counting points on (the Weierstrass form of) $E$ over $\F_p$ (for $p=4k+1$) as follows:
$$J(k)=\# E(\F_p)-p-1.$$
Note also that $J(k)$ and $2a_p$ are equal up to a sign.
The precise relationship is determined in \cite[Part II]{J}:
$$2a_p=(-1)^{k+1}J(k).$$
In particular, the Edwards and Weierstrass forms of $E$ are not isomorphic over rational numbers.
\end{remark}
If $\ell=4$, then $C$ has genus $5$, however, a simple trick
\cite[proof of Theorem 2.1]{C} reduces computation of $n_p(S)$ to computation of
$(2^\ell-1)$ sums of the form:
$$\sum_{a\in\F_p}\genfrac(){0.5pt}{0}{f_I(a)}{p}$$
for all polynomials $f_I(a)=(a+i_1)\cdots(a_i+i_r)$, where $0\le i_1<\ldots<i_r\le \ell-1$.
Hence, it is enough to count points on all the curves $C_I\subset \F_p^2$ given by  equations $y^2=f_I(x)$ for $I=(i_1,\ldots,i_r)\subset (0,\ldots,l-1)$.
For $\ell\le4$ we get five curves of genus $1$ (the other curves have genus $0$):
$$\mbox{(a)} \; y^2=x(x+1)(x+2); \quad \mbox{(b)} \; y^2=x(x+1)(x+3); \quad \mbox{(c)} \; y^2=x(x+2)(x+3);$$
$$\mbox{(d)} \; y^2=(x+1)(x+2)(x+3);\quad \mbox{(e)} \; y^2=x(x+1)(x+2)(x+3).$$
Curves (a) and (d) are isomorphic over $\Q$, and curves
(b) and (c) are isomorphic over $\C$.
While (a) and (d) are CM curves, (b), (c) and (e) are not.
In particular, one needs a deep result from number theory, namely, the Sato--Tate conjecture (see \cite[Section C.21]{S}), in order to decribe the statistics of numbers $n_p(\x\x\x\xl)$ for varying primes $p$. 

 Let us briefly recall the corresponding results. Let $E/\Q$ be an elliptic curve, then for any prime $p$ with good reduction $E_p/\F_p$ modulo $p$ we have $\# E(\F_p)=p+1-a_p(E)$, $a_p(E), |a_p(E)|<2\sqrt p$ being the trace of $p$-Frobenius operator.
There are two cases: 

A. $E$  is a CM-curve, that is $\mathrm{End}(E)\ne\Z;$ B.  $\mathrm{End}(E)=\Z$, that is, $E$ has no complex multiplication.  

We can consider the normalized trace $t_E(p):=\frac{a_p(E)}{2\sqrt p}\in I:= ]-1,1[$. Its behaviour in the cases A and B is very different:

A. The sequence  $$T(E):=\{t_E(p): E\;\; \hbox{has a good reduction modulo } p\}=\{Primes\setminus S(E)\}$$
with a finite set $ S(E)$ of primes is uniformly distributed with respect to the Lebesgue measure $dt$ on $I$.

B. The sequence $T(E)$ is uniformly distributed with respect to the "semi-cirlce"  $d\mu$
 measure on $I$,

$$  d\mu(t):=\frac{2\sqrt{1-t^2}}{\pi}dt.$$

The result in the case A follows from class field theory, Hecke's classical results on the expression of the (global) zeta-function of $E$ via $L$-series with Gr\"ossencharakter and standard distribution results following from Ikehara's theorem, see the last chapter of Lang's "Number theory" \cite{La}.

The result in the case B is just the Sato-Tate conjecture  formulated around 1960 and proved completely in \cite{HSBT}.

Therefore, for $n_p(\x\x\x\xl)$ we get (under a very plausible condition of the independence of various terms in the formula):
$$n_p(\x\x\x\xl)=\frac{p-1}{16} +2\sqrt p \xi+o(\sqrt p)$$
for $\xi:= b \lambda +c\lambda_{ST}$ with rational positive $ a,b,c,  b+c=1-a$
where   $\lambda$ (resp.$\lambda_{ST}$ is a random variable equidistributed on $]-1,1[$ (resp. distributed following the ST measure).

If $\ell=5$, then the same approach yields a curve of genus $2$ (in addition to curves of smaller genera):
$$y^2=x(x+1)(x+2)(x+3)(x+4).$$
However, counting points on this curve can be reduced to counting points on elliptic curves.   
Indeed, this curve has a non-hyperelliptic involution $\sigma:  x+2\mapsto -(x+2), y\mapsto   iy$.  
 Hence, the statistics for $n_p(S)$ can still be computed in terms of the Sato--Tate distribution for elliptic curves.
If $\ell=6$, then there are ``real'' curves of genus $2$, which can not be simplified, that is, the statistics for
$n_p(S)$ would rely on a conjectural Sato--Tate distribution for curves of genus $2$ (see \cite[Theorem 4.8]{Su}) and can not be reduced to the case of elliptic curves.
For $\ell\ge7$, current tools of number theory do not yield even conjectural description of statistics for $n_p(S)$.

We now switch to numbers $n_p(\G)$.
To interpret $n_p(\G)$ geometrically define the variety $X_\Gamma\subset\F_p^\ell\times\F_p^d$, where $d:={\frac{\ell(\ell-1)}{2}}$.
The defining equations are
$$x_j-x_i=y_{i,j}^2, \quad 1\le i<j\le \ell,$$
where $(x_1,\ldots,x_\ell)$ are coordinates in $\F_p^\ell$, and $(y_{i,j})_{1\le i<j\le\ell}$ are coordinates in $\F_p^d$.
Consider the subvariety $X^\circ_\Gamma\subset X_\Gamma$ that consists of all points with pairwise distinct coordinates.
Since there are natural free actions of the permutation group $S_\ell$ and of the additive group $\F_p$ on $X^\circ$, we get that
$$n_p(\G)=\frac{\# X^\circ (\F_p)}{\ell!2^dp}.$$

Note that the multiplicative group $(\F_p^*)^2\subset \F_p^*$ also acts freely on $X^\circ_\Gamma(\F_p)$.
Taking quotient by actions of $\F_p$ and $(\F_p^*)^2$ we may count points on
$X^\circ_\Gamma(\F_p)$ by counting points on $Y_\Gamma(\F_p)$, where $Y_\Gamma\subset\F_p^\ell\times\F_p^d$ is given by equations:
$$x_1=0, \ x_\ell=1;  \quad x_j-x_i=y_{i,j}^2, \quad 1\le i<j\le \ell.$$
The case $\ell=2$ is trivial.
If $\ell=3$, then $Y_\Gamma$ is a pair of conics, so there is a simple formula for $n_p(\Gamma)$.

{\bf Proof of Theorem 2.2} Let $\ell=4$, then $Y_\Gamma$ consists of two copies of the K3 surface $S$ obtained as the intersection of three quadrics in $\F_p^5$:
$$y_{1,2}^2+y_{2,3}^2=y_{1,3}^2; \quad y_{2,3}^2+y_{3,4}^2=y_{2,4}^2; \quad y_{1,2}^2+y_{2,3}^2+y_{3,4}^2=1.$$
In particular, if $p=4k+1$, then formula (1) is equivalent to the following formula:
$$\#S(\F_p)=(p-1)^2+J(k)^2+4. \eqno (2)$$
\begin{lemma} Formula (2) is equivalent to the formula (5) below. \end{lemma}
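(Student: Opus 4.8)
The plan is to compute $\#S(\F_p)$ head-on as a two-variable character sum and to read off formula (2) as the assertion that this sum has the shape recorded in formula (5). Write $\chi$ for the quadratic character (Legendre symbol) with $\chi(0)=0$, so that $\#\{y\in\F_p:y^2=t\}=1+\chi(t)$. Eliminating the dependent squares exactly as in the derivation of $S$ from $Y_\Gamma$, a point of $S$ is determined by the two free squares $a=y_{1,2}^2$ and $b=y_{2,3}^2$, the others being $y_{1,3}^2=a+b$, $y_{2,4}^2=1-a$ and $y_{3,4}^2=1-a-b$. Therefore
$$\#S(\F_p)=\sum_{a,b\in\F_p}(1+\chi(a))(1+\chi(b))(1+\chi(a+b))(1+\chi(1-a))(1+\chi(1-a-b)),$$
a sum governed by the five affine forms $a$, $b$, $a+b$, $1-a$, $1-a-b$.

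First I would expand this product into its $2^5$ summands $\sum_{a,b}\prod_{i\in T}\chi(L_i)$, one for each subset $T$ of the forms, and dispose of the elementary ones. The empty term is $p^2$; every singleton vanishes since $\sum_t\chi(t)=0$; and any term in which one variable separates off collapses to a one-variable quadratic sum, evaluated by the standard rule that $\sum_t\chi(\text{quadratic})$ equals $-1$ in the nondegenerate case and $p-1$ on the degenerate locus (here $p\equiv1\pmod4$, so $\chi(-1)=1$ throughout). Summing over $b$ first, each fibre contributes $p-3$ plus the cubic sum $\sum_b\chi(b(a+b)(1-a-b))$, which is (up to sign) the trace of Frobenius of the Legendre-type elliptic curve with roots $0,-a,1-a$; weighting by $(1+\chi(a))(1+\chi(1-a))$ and summing over $a$ reproduces the polynomial skeleton (note $\sum_a(1+\chi(a))(1+\chi(1-a))=p-1$, the point count of the circle $s^2+t^2=1$) together with the genuinely two-dimensional sums.

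The essential content is concentrated in those two-dimensional sums in which no variable separates, the extreme case being the full product $\Sigma=\sum_{a,b}\chi\bigl(ab(a+b)(1-a)(1-a-b)\bigr)$. Here the substitution $c=a+b$ is decisive: it factors the integrand as $\Sigma=\sum_{a,c}\chi(a(1-a))\,\chi(c(1-c))\,\chi(c-a)$, and the further symmetrizing change $a=\tfrac{1+\alpha}{2}$, $c=\tfrac{1+\gamma}{2}$ turns this into $\Sigma=\chi(2)\sum_{\alpha,\gamma}\chi\bigl((1-\alpha^2)(1-\gamma^2)(\gamma-\alpha)\bigr)$, where the factor $32$ produced by the substitution matches the denominator of $d(k)$. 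This is a correlation of two copies of the character $\alpha\mapsto\chi(1-\alpha^2)$ linked by the difference $\chi(\gamma-\alpha)$, and it is exactly here that a square appears: evaluating it through Jacobi-sum manipulations and the CM relation $J(k)=\sum_x\chi(x^3-x)$ of the Remark should yield $J(k)^2$ up to explicit polynomial terms. Formula (5) should be precisely this bookkeeping identity, expressing $\#S(\F_p)$ as its polynomial skeleton plus such a core sum, so that $(2)\Leftrightarrow(5)$ once the skeleton is collected into $(p-1)^2+4$.

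The step I expect to be routine but delicate is the accounting in the second paragraph: the degenerate loci $a\in\{0,1\}$, $b\in\{0,-a,1-a\}$, and so on, where a form vanishes or two forms coincide, replace the generic value $-1$ of a quadratic sum by $p-1$ and must be tracked precisely, since an error there shifts the final constant. The genuine obstacle, however, is establishing that the two-dimensional sums contribute only through the single CM invariant $\Sigma$ and that every other elliptic curve among (a)--(e) feeds only into the polynomial part; this is the concrete incarnation of the ``unexpected relation'' between the K3 surface $S$ and the CM elliptic curve $y^2=x^3-x$, and it is what makes $J(k)^2$ --- rather than $J(k)$ --- the sole transcendental ingredient. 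If, as seems likely, formula (5) isolates $\Sigma$ and defers its evaluation, then proving the lemma itself reduces to the finite, if laborious, verification that all the remaining terms are elementary.
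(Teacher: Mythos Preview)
Your proposal addresses the wrong question. The lemma does not ask you to prove formula~(2), nor to express $\#S(\F_p)$ as a character sum that might turn out to be formula~(5). Formula~(5) is stated explicitly in Theorem~3.2: it is the assertion $M_p=(p+1)^2+(N_p-p)^2+1$, where $M_p$ is the $\F_p$-point count of the \emph{different} affine surface $X\colon z^2=(x^2y^2+1)(x^2+y^2)$ and $N_p$ that of the affine curve $E\colon y^2=x^3-x$. Since the Remark after Example~\ref{e.Gauss} gives $J(k)=N_p-p$, the equivalence $(2)\Leftrightarrow(5)$ reduces to the purely numerical relation
$$M_p-\#S(\F_p)=\bigl[(p+1)^2+1\bigr]-\bigl[(p-1)^2+4\bigr]=4p-3,$$
and \emph{this} is what must be proved. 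No evaluation of either side is required.

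The paper does it by writing down an explicit birational correspondence between $X$ and $S$: one rationally parameterizes the first two quadrics defining $S$, and after a change of variables the third quadric becomes the equation of $X$. The maps $\phi:X\to S$ and $\psi:S\to X$ are given by explicit rational formulas; they restrict to a bijection between $X\setminus D$ and $S\setminus D_1$ for concrete divisors $D,D_1$, and a direct count gives $\#D(\F_p)=6p-4$, $\#D_1(\F_p)=2p-1$, hence $M_p-\#S(\F_p)=(6p-4)-(2p-1)=4p-3$. Your character-sum attack never mentions $X$ at all; it is, in effect, an attempt to prove~(2) outright --- that is, to prove the Lemma and Theorem~3.2 simultaneously --- which is a substantially harder task, and your closing remark that ``formula~(5) isolates $\Sigma$ and defers its evaluation'' shows you were guessing at the content of~(5) rather than reading it off the page.
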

\begin{proof} 
We use the following rational parameterization of the first two quadrics defining $S$:
$$y_{1,2}=(x^2-1)s, y_{2,3}=2xs, y_{1,3}=(x^2+1)s,$$  
$$y_{3,4}=(y^2-1)t, y_{2,3}=2yt, y_{2,4}=(y^2+1)t,$$
and if we  put $ z:=ys^{-1},s:=ytx^{-1}$ then $s=z/y, t=x/z,t/s=y/x$ and  the third equation reads
$$z^2=(x^2y^2+1)(x^2+y^2),$$
which defines a singular K3  surface $X$ in a $3$-space.
This gives  birational maps
$$\phi:X\longrightarrow S, \psi: S\longrightarrow X,$$
given by
$$y_{1,2}=(x^2-1)z/y,y_{2,3}=2xsz/y,y_{1,3}=(x^2+1)z/y $$
$$y_{3,4}=(y^2-1)x/z, y_{2,3}=2yx/z, y_{2,4}=(y^2+1)x/z,$$
for $\phi$ and 
$$ x=\frac{y_{2,3}}{y_{1,3}-y_{1,2} }, y=\frac{4y_{2,3}}{(y_{1,3}-y_{1,2})^2 (y_{2,3}-y_{3,4})},z=\frac{2y_{2,3}}{(y_{1,3}-y_{1,2}) (y_{2,3}-y_{3,4})}$$
for $\psi$. These maps establish a bijection 
between $X\setminus D$ and $S\setminus D_1$,
where the divisors $D,D_1$ are given by $\{y=z=0\}$ and  $\{y_{1,3}-y_{1,2}=y_{2,3}=0 \}$, respectively. A simple calculation shows that there are $6p-4$ $\F_p$-rational points on $D$ and $2p-1$ such points on $D_1$ which proves the desired equivalence.
\end{proof}

\begin{thm}
Consider the affine surface $X$ in a $3$-space given by the equation:
$$z^2=(x^2y^2+1)(x^2+y^2), \eqno(3)$$
and the affine plane curve $E$ given by the equation:
$$y^2=x^3-x. \eqno(4)$$
Let $M_p$ and $N_p$, respectively, denote the number of solutions of (3) and (4) modulo a prime $p$.
Then $M_p$ and $N_p$ are related by the following identity:
$$M_p=(p+1)^2+(N_p-p)^2+1.\eqno(5)$$
\end{thm}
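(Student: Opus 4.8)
The plan is to turn both point counts into character sums and reduce the whole statement to a single identity between them. Write $\chi$ for the quadratic character (Legendre symbol) on $\F_p$, with $\chi(0)=0$. For fixed $(x,y)$ the number of $z\in\F_p$ with $z^2=(x^2y^2+1)(x^2+y^2)$ is $1+\chi\big((x^2y^2+1)(x^2+y^2)\big)$, so $M_p=p^2+\Sigma$ with
\[
\Sigma:=\sum_{x,y\in\F_p}\chi\big((x^2y^2+1)(x^2+y^2)\big).
\]
Likewise $N_p=p+S_E$, where $S_E:=\sum_{x\in\F_p}\chi(x^3-x)$, so $(N_p-p)^2=S_E^2$. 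Hence (5) is equivalent to the single identity $\Sigma=2p+2+S_E^2$, and everything reduces to evaluating the K3 sum $\Sigma$ in terms of the elliptic curve sum $S_E$. From now on assume $p\equiv1\pmod4$, the case relevant to Goncharova's theorem; for $p\equiv3\pmod4$ the curve $E$ is supersingular and $\Sigma=2p+2$ follows from a direct, CM-free computation.

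The point of $p\equiv1\pmod4$ is twofold: $\chi(-1)=1$, and there is $i\in\F_p$ with $i^2=-1$, which gives the factorization
\[
(x^2y^2+1)(x^2+y^2)=(xy+i)(xy-i)(x+iy)(x-iy).
\]
The linear change $a=x+iy$, $b=x-iy$ then expresses the summand as $\chi\big(ab\,(16-(a^2-b^2)^2)\big)$; rescaling $a=2\alpha$, $b=2\beta$ (so the extra scalar is the square $64$) and setting $u=\alpha+\beta$, $v=\alpha-\beta$ brings $\Sigma$ to the symmetric shape
\[
\Sigma=\sum_{u,v\in\F_p}\chi\big((u^2-v^2)(1-u^2v^2)\big).
\]

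The crux is a multiplicative substitution that separates the variables. For $u,v\neq0$ put $p_1=uv$ and $p_2=u/v$, so $u^2=p_1p_2$ and $v^2=p_1/p_2$; then $(u^2-v^2)(1-u^2v^2)=p_1(1-p_1^2)(p_2^2-1)/p_2$, and using $\chi(-1)=1$ and $\chi(1/p_2)=\chi(p_2)$ one obtains the clean identity
\[
\chi\big((u^2-v^2)(1-u^2v^2)\big)=\chi(p_1^3-p_1)\,\chi(p_2^3-p_2),
\]
a \emph{product of two copies} of the summand defining $E$ (this holds as $0=0$ on the zero loci $u^2=v^2$ and $u^2v^2=1$ as well). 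The map $(u,v)\mapsto(p_1,p_2)$ is two-to-one onto $\{p_1,p_2\neq0:\chi(p_1)=\chi(p_2)\}$ and misses all other pairs, so inserting the indicator $\tfrac12(1+\chi(p_1)\chi(p_2))$ of the matching condition (the fiber factor $2$ cancelling the $\tfrac12$) yields
\[
\sum_{u,v\neq0}\chi\big((u^2-v^2)(1-u^2v^2)\big)=S_E^2+(S_E')^2,\qquad S_E':=\sum_{t\in\F_p}\chi(t)\chi(t^3-t).
\]
The twisted sum collapses to a constant: for $t\neq0$ one has $\chi(t)\chi(t^3-t)=\chi(t^2-1)$, so $S_E'=\sum_{t\neq0}\chi(t^2-1)=-2$ by the standard evaluation of a quadratic character sum. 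Finally the boundary loci contribute $\sum_{u}\chi(u^2)=p-1$ (for $v=0$) and $\sum_{v\neq0}\chi(-v^2)=p-1$ (for $u=0$). Collecting, $\Sigma=2(p-1)+S_E^2+(S_E')^2=2p+2+S_E^2$, which is exactly (5).

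The only genuinely nonroutine ingredient is the separating substitution $(u,v)\mapsto(uv,\,u/v)$: once it is found, the rest is bookkeeping of multiplicities and of degenerate loci. I expect the main obstacle to be precisely discovering this substitution — equivalently, recognizing after the $i$-factorization that the K3 sum is a \emph{constrained} product of two $E$-sums — and checking its compatibility with $\chi$ at the boundary $u=0$, $v=0$ and at the zeros of the cubic factors; the reduction to the symmetric form and the collapse $S_E'=-2$ are easy by comparison.
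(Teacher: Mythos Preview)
Your argument is correct and genuinely different from the paper's. The paper fibers the auxiliary surface $X'$ over the $t$-line into elliptic curves $X_t$, observes the striking self-similarity that $X_t$ is $\F_p$-isomorphic to the very curve $E_i$ parameterizing the values of $t$ with a given residue pattern of $(t,t^2+1)$, and then assembles $\#X'(\F_p)$ from $\tfrac14\sum_i\#E_i^\circ(\F_p)^2$ via case tables in $p\bmod 8$. You instead stay entirely within character sums and discover the same ``square of an elliptic sum'' structure algebraically via the multiplicative substitution $(u,v)\mapsto(uv,u/v)$; this is more elementary, avoids the fibration picture and the case tables, and yields the identity $\Sigma=2p+2+S_E^2$ in one stroke. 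Two remarks. First, your detour through the factorization over $\F_p(i)$ is unnecessary: applying the substitution $(x,y)\mapsto(xy,x/y)$ directly to the original sum gives $\chi\big((x^2y^2+1)(x^2+y^2)\big)=\chi(u^3+u)\,\chi(v^3+v)$, which works for every odd $p$; for $p\equiv1\pmod4$ the curve $y^2=x^3+x$ is $\F_p$-isomorphic to $E$, so the sum squares to $S_E^2$, while for $p\equiv3\pmod4$ the odd symmetry $u\mapsto-u$ forces $\sum_u\chi(u^3+u)=0=S_E$. This dispatches the case you only asserted. Second, your evaluation $S_E'=\sum_{t\neq0}\chi(t^2-1)=-2$ implicitly uses $\chi(-1)=1$; with the direct substitution one gets instead $\sum_{t\neq0}\chi(t^2+1)=-2$, valid for all odd $p$. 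What the paper's approach buys is a transparent geometric explanation (the K3 is, up to boundary, a ``square'' of the CM curve), while yours buys brevity and uniformity in $p$.
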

\begin{proof}
Making change of variables $x=x_1$, $y=tx_1$, $z=y_1x_1$ we may replace the surface $X$ by the surface $X'$ given in coordinates $(x_1, y_1, t)$ by the equation:
$$y_1^2=(t^2x_1^4+1)(t^2+1). \eqno(6)$$
It is easy to check that 
$$\#X(\F_p)=\#X'(\F_p)+p.$$
We now count separately points on $X'\setminus X'_0$ and on $X'_0$ where $X'_0:= X'\cap (\{x_1=0\}\cup\{y_1=0\}\cup\{t=0\})$.

By inclusion--exclusion formula we get that if $p\equiv 1 \pmod 4$, then
$$\#X'_0(\F_p)=\#[X'\cap\{x_1=0\}](\F_p)\ + \ \#[X'\cap\{y_1=0\}](\F_p) \ + \ \#[X'\cap\{t=0\}](\F_p) \ -$$
$$- \ \#[X'\cap\{x_1=y_1=0\}](\F_p) \ - \ \#[X'\cap\{x_1=t=0\}](\F_p) \ =$$
$$= \ (p-1)+(4p-10)+2p-2-2 \ = \ 7p-15$$ 
Here we used that $X'\cap\{y_1=t=0\}$ is empty.

To count points on $X'\setminus X'_0$ we regard $t$ as a parameter.
If $t$ is fixed then equation (6) defines an elliptic curve $X_t\subset X'$. 
The number of points on such a curve is uniquely determined by the quadratic residue pattern formed by $(t, t^2+1)$. 
Hence, there are four cases to consider: $R R$, $R N$, $N R$, $N N$.
There are four elliptic curves $E_1$, $E_2$, $E_3$, $E_4$, respectively, that parameterize pairs $(t,t^2+1)$ in every case.
For instance, if $(t,t^2+1)$ has pattern $RR$ then $t=s^2$ and $t^2+1=u^2$ where
$(s,u)$ is a point with non-zero coordinates on the plane curve $E_1$ given by the equation $u^2=s^4+1$.
Clearly, the points $(\pm s,\pm u)$ and $(\mp s,\pm u)$ correspond to the same value of $t$.
Hence, the number of values $t$ such that the pair $(t,t^2+1)$ has 
pattern $RR$ is equal to $\frac{1}{4}\#E^\circ_1(\F_p)$ where $E^\circ_1$ denotes $E_1\setminus(\{s=0\}\cup\{u=0\})$.
An analogous equality holds in the other three cases.

It is interesting that for all values of $t$ parameterized by $E_i$ the corresponding elliptic curve $X_t$ is isomorphic to $E_i$ over $\F_p$.
For instance, if $t$ and $t^2+1$ are quadratic residues, then $X_t$ is isomorphic to the curve $y^2=x^4+1$ by the change of variables $x=sx_1$, $y=u^{-1}y_1$.
Hence, we get the following identity:
$$\#[X'\setminus X'_0](\F_p)=\frac{1}{4}\sum_{i=1}^{4}\#E^\circ_i(\F_p)^2$$

Using Table 1 we get that for $p=8k\pm1$ the right hand side is equal to
$$\frac{1}{4}[(p-7+a)^2+2(p-3-a)^2+(p+1+a)^2]=p^2-6p+17+a^2.$$
The computation with Table 2 for $p=8k\pm3$ yields the same answer.
It remains to note that the curves $E_1$ and $E$ have the same trace of Frobenius up to a sign.

\begin{table}[h]
		\centering
\begin{tabular}{|l|c|c|c|c|} \hline\hline
{\em Curve} & \# points at $\infty$ & \# points at $u=0$ or $s=0$ & sum & trace of Frobenius\\ \cline{2-5}
$u^2=s^4+1$ & 2 & 6 & 8 & $a$\\
\cline{2-5}
$\delta u^2=s^4+1$ & 0 & 4 & 4 & $-a$\\
\cline{2-5}
$u^2=\delta^2s^4+1$ & 2 & 2 & 4 & $-a$\\
\cline{2-5}
$\delta u^2=\delta^2s^4+1$ & 0 & 0 & 0 & $a$\\
\hline\hline
\end{tabular}
\medskip

\caption{Case $p=8k\pm1$}
\end{table}

Here and below $\delta\in\F_p^*$ denotes a quadratic non-residue.

\begin{table}[h]
		\centering
\begin{tabular}{|l|c|c|c|c|} \hline\hline
{\em Curve} & \# points at $\infty$ & \# points at $u=0$ or $s=0$ & sum & trace of Frobenius\\ \cline{2-5}
$u^2=s^4+1$ & 2 & 2 & 4 & $a$\\ 
\cline{2-5}
$\delta u^2=s^4+1$ & 0 & 0 & 0 & $-a$\\
\cline{2-5}
$u^2=\delta^2s^4+1$ & 2 & 6 & 8 & $-a$\\
\cline{2-5}
$\delta u^2=\delta^2s^4+1$ & 0 & 4 & 4 & $a$\\
\hline\hline
\end{tabular}
\medskip

\caption{Case $p=8k\pm3$}
\end{table}

\end{proof}
\newpage

\end{document}